\def\bb{\mathbf{b}}
\def\bc{\mathbf{c}}
\def\bw{\mathbf{w}}
\def\by{\mathbf{y}}
\def\bx{\mathbf{x}}
\def\bu{\mathbf{u}}
\def\bc{\mathbf{c}}
\def\bs{\mathbf{s}}
\def\bt{\mathbf{t}}
\def\be{\mathbf{e}}
\def\br{\mathbf{r}}
\newcommand{\eq}[1]{(\ref{#1})}
\def\1{\mathbf{1}}
\def\mod{\textrm{mod}}
\def\xt1{\bx_i^T \1}
\def\yt1{\1^T \by^q}
\newtheorem{theorem}{Theorem}
\newenvironment{proof}[1][Proof]{\begin{trivlist}
\item[\hskip \labelsep {\bfseries #1}]}{\end{trivlist}}
\title{Novel Method for Background Phase Removal on MRI Proton Resonance Frequency Measurements}
\author[1,3]{Wolfgang~Stefan}
\author[1]{David~Fuentes}
\author[2]{Erol~Yeniaras}
\author[1]{Ken-Pin~Hwang}
\author[1]{John~D.~Hazle}
\author[1]{R.~Jason~Stafford}
\affil[1]{Departments of Imaging Physics, The University of Texas MD Anderson Cancer Center, Houston, Texas, 77054}
\affil[2]{erol.yeniaras@gmail.com}
\affil[3]{Corresponding author: wstefan@mdanderson.org}
\begin{document}
\maketitle

\section{Introduction}
%
%
%
%
Phase data provides information for  a variety
of applications in magnetic resonance imaging (MRI) including
shimming, artifact removal, temperature imaging, and improving soft tissue contrast  
\cite{Schweser2010,fuentesetal12a,Wharton2010,Rauscher:2005,Duyn:2007}. 
One of the most prominent examples of using phase images as an
additional contrast mechanism is susceptibility weighted imaging (SWI), which
is useful in a wide variety of neuro-pathologies, e.g., traumatic brain injury,
cerebral microhemorrhages, cerebrovascular disease, multiple sclerosis,
intracranial hemorrhage, and tumors \cite{Haacke2004,Haacke_SWI_book}.
Quantitative susceptibility mapping is another useful application of using
phase images to complement magnitude data \cite{Rochefort2008,Zhou2014}.
The MR phase data is directly related to the proton
resonance frequency (PRF) when gradient echo imaging sequences are used. In
order to make best use of phase data it is necessary to accurately separate the
background phase (e.g., generated by local field inhomogeneities, air-tissue
interfaces, etc.) from the phase contributions of tissue magnetic susceptibility
differences. 

Reference scans, in which an identical object is scanned with the
susceptibility sources removed, provides the most reliable background field
measurements \cite{Rochefort2008,Liu2009}.

However, it is impractical or impossible to perform a reference scan in many
{\it in vivo} situations.

The background can be estimated and removed, if {\it a priori} knowledge of the
spatial distribution of all background susceptibility sources
\cite{Neelavalli2009,Koch2006} can be assumed. The assumption of the
separability between local and background fields in a certain space is often
violated, leading to erroneous estimation of local fields, and the results
depend on the choice of the basis functions \cite{Langham2009}. In practice,
the knowledge of the background susceptibility source surrounding a given
region of interest (ROI) is often not fully available or sufficiently accurate.
This leads to substantial residual background field that requires additional
attention, particularly when there are significant variations in susceptibility
outside the imaging field of view (FOV) \cite{Neelavalli2009}. 

Current background removal techniques, that do not require prior knowledge of
the background susceptibility source assume that the local and background
fields are in a space spanned by the Fourier basis \cite{Wang2000} or
polynomial functions are separable \cite{Rochefort2008,Langham2009,Yao2009}.
Usually, a user defined (or automatically determined) ROI has to be defined. In
\cite{Grissom2010} the selection of the ROI for the polynomial interpolation
problem is avoided using a reweighed $\ell 1$ regression. 

Another method to estimate the background is to solve a Dirchlet boundary
problem of the Laplace or Poisson equation \cite{Solomir2012,Zhou2014,Li:2014}. It is argued in
\cite{Solomir2012} that the Laplace solution is potentially more stable than
polynomial interpolation and provides physically more meaningful solutions
because the magnetic field and thus the PRF in homogeneous tissue has the
property $\vec{\nabla}^2 \Phi = 0$. However this method also relies on the
manual selection of an ROI.

In this paper, we seek to remove the background without the use of a reference
PRF map, an ROI or an heuristic basis like the Fourier or polynomials. It can
be interpreted as physically meaningful similar to the arguments in
\cite{Solomir2012,Zhou2014}, and has very limited computational costs. On the basis of
its smoothness, the proposed method suppresses the background, rather than
estimating and subtracting it. We assume that sharp changes in the image are
either due to tissue-tissue interfaces or due to the heating, while the
background is smooth (i.e., does not contain very large gradients). We also
assume that sharp gradients are sparse, i.e., only a few pixels contain either
tissue-tissue interfaces or heating, and that the PRF for the rest of the image
is a harmonic function. Our background suppression algorithm has two stages;
First sharp gradients are detected in the phase image, using a sparsity
enforcing edge detection method. Second a corrected PRF map is found, which has
the same sharp gradients, and satisfies $\vec{\nabla}^2 \Phi = 0$ in the smooth
regions.  As a consequence the difference between the original PRF map and the
corrected PRF map is a harmonic function, which is physically meaningful as
argued in \cite{Solomir2012,Zhou2014}.

Our approach can be used in 2D and 3D. Practically, temperature measurements
are performed on 2D scans because of time constraints. In this case we make the
approximation, that the background is harmonic w.r.t. the partial derivatives
in the imaging plane. 

 

\section{Algorithm}
For the following discussion all functions defined on a continuous domain are
non-bold letters while discretely defined functions are denoted as bold
letters. The bold letter variables can always be interpreted as vectors by
reordering. 

The phase, $\Phi(x,y)$, of the measured MRI signal at position~$(x,y)$ is modeled as a
function defined on a continuous domain 
$$
\Phi(x,y) = \Phi_b(x,y) + \Phi_h(x,y), 
$$
where $\Phi_h(x,y) \in (-\pi; \pi]$ is the phase change due to structures of
interest (e.g., heating or susceptibility differences)  and $\Phi_b(x,y) \in
(-\pi; \pi]$ is the background phase due to $B_0$ field inhomogeneities.
Practically we are limited to pixel or voxel based evaluations of $\Phi$,
$\Phi_b$ and $\Phi_h$. The underlying continuously defined functions are used
to derive the method but usually remain unknown. To separate $\Phi_b$ and
$\Phi_h$, a common assumption is that $\Phi_b$ has slow spatial variations
and $\Phi_h$  has large
spatial variations, for example as measured by the magnitude of the gradient.
However, it can be difficult to determine if large gradients  of $\Phi$ are due
to large variations of $\Phi_h$ or the background. Thus, a tunable approach is
needed to separate the two. 

\begin{figure}[h!] \includegraphics[width=9cm]{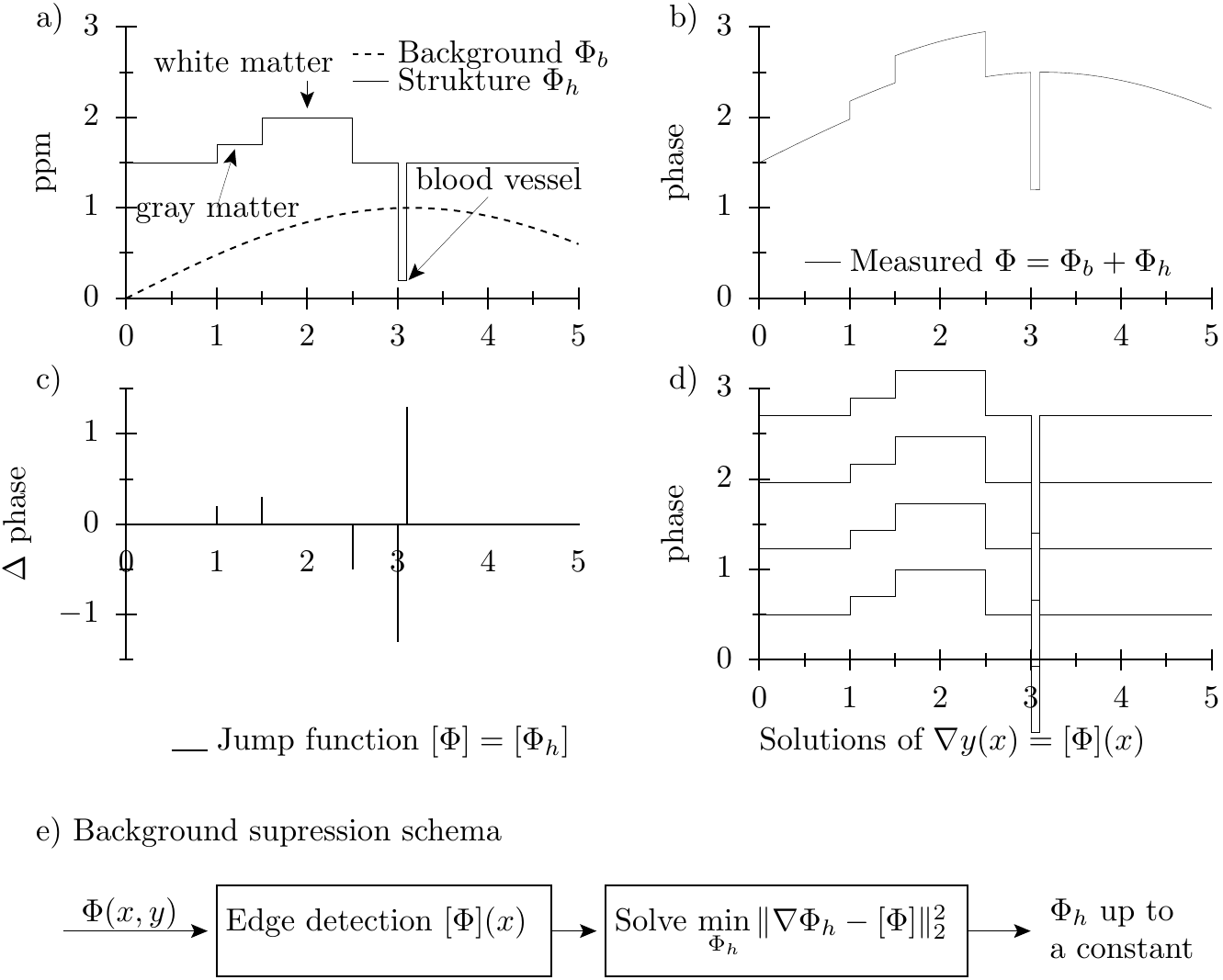}
\caption{Illustration of the proposed background suppression method. Panel a) shows the desired phase signal due to different structures $\phi_h$ and the background signal $\phi_b$. Panel b) shows the measured signal $\phi$. The common jump function (i.e. the hight of the jumps) of $\phi$ and $\phi_h$ is shown in Panel c). The desired $\phi_h$ can be obtained from $[\phi]$ up to a constant as shown in Panel d). Panel e) illustrates the proposed method to recover $\phi_h$ from $\phi$.} \label{fig:surpess_cartoon}
\end{figure}

To illustrate the idea behind our approach, 
assume for simplicity, $\Phi_h$ is
a piecewise constant function. Further, assume that $\Phi_b$ is a smooth
function (i.e., derivatives of any order are bounded). Panel a) of Figure~\ref{fig:surpess_cartoon} shows an example.  Then $\Phi_b$ can be
estimated from $\Phi$ as illustrated in Panel e): First, find the jump size and direction (up
or down) of the discontinuities (Panel c). Because $\Phi_b$ is
smooth and does not contain jumps, all jumps in $\Phi$ are due to jumps in
$\Phi_h$. Further, as a piecewise constant function, $\Phi_h$ is defined by
it's jump discontinuities up to an additive constant as illustrated in Panel d).
I.e. if we find a function
with the same jump discontinuities in size and direction as $\bf\Phi$ which has
zero gradient away from edges, then, this function is equal to $\bf\Phi_h$ up to an
additive constant. This constant can, for example, be determined if we assume
that $\Phi_h$ is zero in a particular region of the image, for example in
regions outside of the heating zone. Phase wrapping artifacts can easily be
removed if  jumps larger or equal to $2\pi$ are projected back to the interval
$[0, 2\pi)$.

\subsection{Edge detection}
We limit the description of the algorithm to 2D functions, more dimensions are
treated analogous to the 2D case.  Two main approaches have been proposed in
the literature to detect edges. One approach is to characterize edges in a
pixel-based image by identifying regions of sharp variations from differences
between neighboring pixels, e.g., the Canny edge detector \cite{Canny:1986uq}.
Another approach uses a wavelet representation of the image to identify the
edges, see e.g., \cite{Mallat92}. However, both approaches cannot easily
distinguish sharp edges from steep gradients \cite{GelbTadmor06}, which in our
case, leads to an insufficient removal of the background. Additionally, $\Phi$
does not contain real jump discontinuities because almost all natural images, and the considered MRI phase images in particular,
are corrupted by some degree of blurring. Therefore a useful edge detection for
this application has to allow sharp gradients to be identified as edges, while
dismissing less sharp gradients as background. Here, we propose a method that
includes a tuning parameter to achieve an appropriate separation of background
phase and phase of interest.

The tuning parameter has by be chosen on a case to case basis by visually inspecting
the images. We have found that for similar applications, such as removing the background 
field in brain MRI scans, similar parameters can be used. 


To make the edge detection more robust, we assume only very few pixels contain
sharp gradients, i.e. the gradients are {\it sparse}. This assumption is
similar to the sparsity assumption that is usually made in a compressed sensing
approach \cite{Lustig:2008fk}.  Therefore, we choose the approach in
\cite{Stefan:2012kq} which is sparsity enforcing and robust in the presence of
steep gradients. The proposed method is similar to the method described in
\cite{Stefan:2012kq}, with the difference, that we exchange the edge detection
kernels that are based on partial Fourier from the original publication with a
kernel based on finite differences as described as follows. 

Analogous to \cite{Archibaldetal05}, edges in $\Phi(x,y)$ are represented by
the so called {\it jump functions} in $x$- and $y$- direction denoted by
$[\Phi]_x(x,y)$ and $[\Phi]_y(x,y)$. The jump functions are defined by 
$$
\begin{array}{rcl} 
  \left[\Phi\right]_x(x,y) & = & \Phi(x^+,y) -   \Phi(x^-,y) \\ 
  \left[\Phi\right]_y(x,y) & = &  \Phi(x,y^+) -  \Phi(x,y^-) \\  
\end{array}
$$ 
where $x^+$, $y^+$,$x^-$ and $y^-$ are the well defined right and left hand
limits. If $\Phi(x,y)$ is smooth with respect to $x$ and $y$ at $(x,y)$, then
$[\Phi]_{\{x,y\}}(x,y)=0$. Further, $[\Phi]_x(\xi,\nu)$ or $[\Phi]_y(\xi,\nu)$
is the height of the jump if $\Phi$ has a jump in $x$- or $y$-direction at
$(\xi,\nu)$ respectively. Our goal is to find discrete representations of the
jump functions. 

Assume a pixelated image is given by sampling a function defined on a
continuous domain on a regular grid, i.e.,  $({\bf \Phi})_{i,j} =
\Phi(x_i,y_i)$, where $i,j = 1, \cdots, N$. Since the underlying continuously
defined function $\Phi$ is not known, finding an approximate for reliable edge
detection is very difficult. In \cite{Archibaldetal05,GelbTadmor06} the
underlying function is expressed as a Taylor expansion around a given pixel
location. The remainder of a Taylor expansion up to degree $m$, vanishes if the
underlying function is a polynomial of degree $m$ in the neighborhood of the
given pixel. Similarly, if the underlying function is not a polynomial, but is
sufficiently smooth, i.e., has several continuous derivatives, the remainder
can be expected to be very small. On the other hand, it is very large at the
locations of jump discontinuities. If the Taylor expansion is approximated by
the interpolating polynomial around a given grid point, this idea leads to the
so called {\it polynomial annihilation} edge detection method, described in
\cite{Archibaldetal05}, which can be expressed as \begin{equation}
\begin{array}{rcl}
  (\bf g_x^m)_{i,j} & = & \sum\limits_{k=1}^{m+1} c_{m,k} \Phi(x_{i+k},y_j)\\
  (\bf g_y^m)_{i,j} & = & \sum\limits_{k=1}^{m+1} c_{m,k} \Phi(x_i, y_{j+k})
  \label{eq:def:g} 
\end{array} 
\end{equation} 
where $\bf g_x^m$ and $\bf g_y^m$ are the jump function approximations for a
$m^\textrm{th}$ order polynomial edge detector, and 
\begin{equation} 
c_{m,k} =
-  \frac{ m! q_m}{\Pi_{l=1,k \neq j}^{m+1} (k-l)}.  \label{eq:def:c_j}
\end{equation} 
The scalars $q_m$ is chosen such that the jump height is preserved.  A more
detailed discussion and derivation of the edge detection kernels $c_{m,k}$ can
be found in \cite{SRG:08}. From here on we fix the order $m$, if not indicated
otherwise, and drop it from $\bf g$ and $\bf c$ for clarity. The sums in
(\ref{eq:def:g}) can be computed by a convolution:
\begin{eqnarray} 
\bf g_x & =  & \bf c * \bf \Phi \nonumber \\ \bf g_y & = & \bf
c^T * \bf \Phi, \label{eq:jump_conv} 
\end{eqnarray} 
where $\bf c$ is obtained from (\ref{eq:def:c_j}) by zero padding: 
\begin{equation} \bf c_{i,k} = \left\{
\begin{array}{rcl} c_{m,k} & \textrm{for $i =1$ and $k \leq m +1$} \\ 0 &
\textrm{otherwise} \end{array} \right. , \label{eq:jump_detector}
\end{equation} and $\bf c^T_{i,j} = \bf c_{j,i}$.

\begin{figure}[h!] \includegraphics[width=9cm]{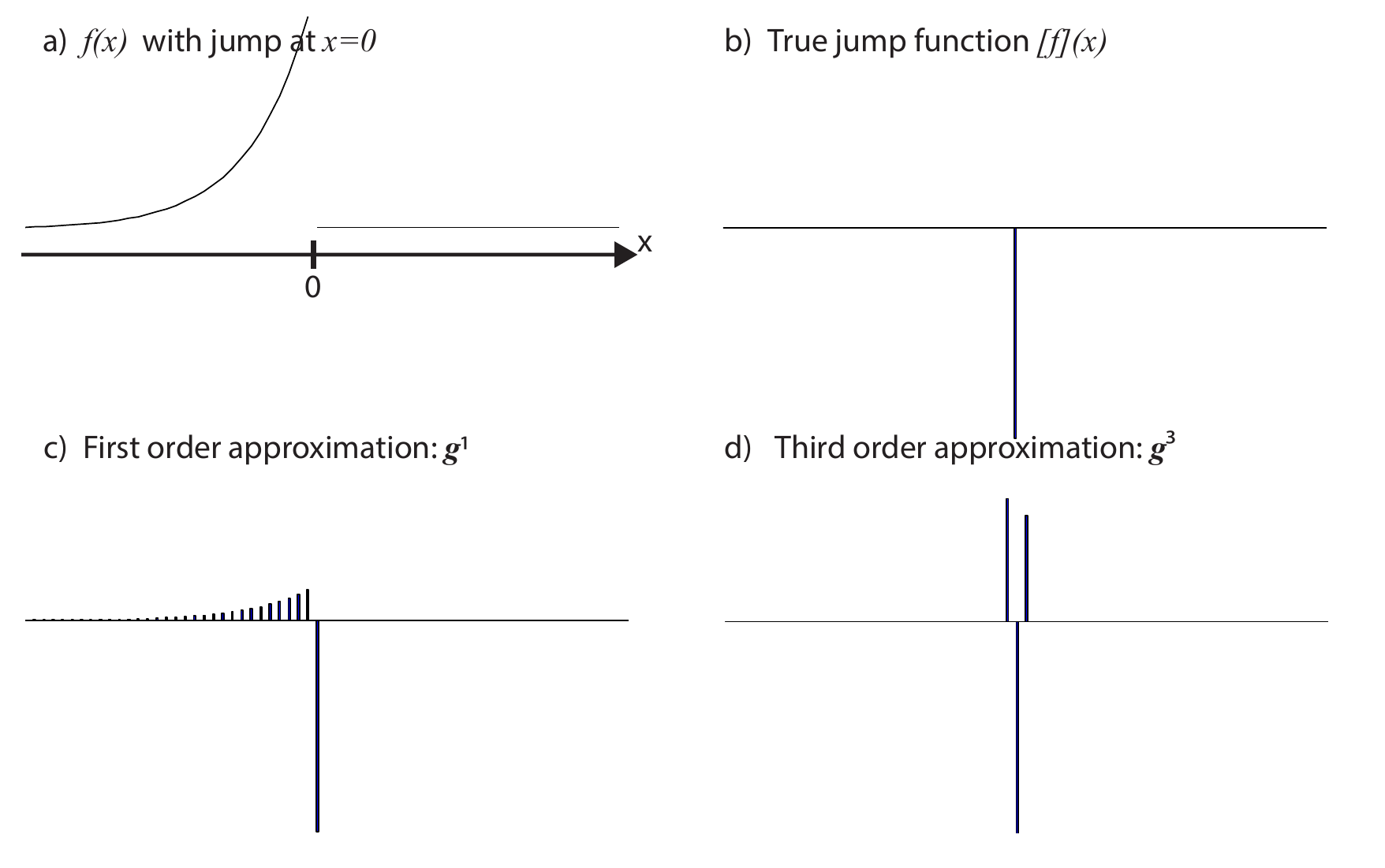}
\caption{Illustration of the jump function (b)  of a piecewise smooth function
(a). The approximations using low order (c) and high order (d). Lower order (c)
can lead to miss classification of edges in regions of large gradients. Higher
order (d) produces unwanted oscillations.} \label{fig:oscillations}
\end{figure}

It is shown in \cite{Archibaldetal05,GelbTadmor06} that (\ref{eq:def:g})
converges to the true jump function as $N \to \infty$, however, this method
introduces oscillations near locations of edges. These oscillations can easily
lead to misclassification of edges. Figure~\ref{fig:oscillations} shows a test
function $f(x)$ in panel a and its true jump function $[f](x)$in panel (b).
Panel (c) shows the discrete approximation of the jump function using $m=1$
(${\bf g}^1$). The non-zero components of the jump function approximation can
easily be misinterpreted as edges. Panel (d) shows the discrete approximation
of the jump function using $m=3$ (${\bf g}^3$). The oscillations around the
jump are artifacts of the higher order method.

To reduce the oscillations a non-linear filtering technique has been used which
combines the jump function approximations of several orders to one combined
jump function. However this approximation is very sensitive to noise
\cite{Archibaldetal05,GelbTadmor06}. In \cite{Stefan:2012kq} it was proposed to
remove similar oscillations by deconvolution.  Oscillations are modeled as a
Point Spread Function (PSF), the so called {\it matching waveform}, and removed
using $\ell 1$ minimization, which forces the edges to be sparse. In
\cite{Stefan:2012kq} a closed form solution of the matching waveform is used to
perform the deconvolution. Because the matching waveform based on
(\ref{eq:def:g}) has no closed form it has to be estimated. An estimate can be
obtained by applying (\ref{eq:def:g}) to a unit jump, i.e. a jump in either
$x-$ or $y-$ direction with size one. This results in the matching wave form
\begin{equation}
    \bf w_{i,j} =  \left\{ \begin{array}{cl} \sum\limits_{k=1}^{N/2} \bf c_{k,j} +  \sum\limits_{k=N-i+2}^{N} \bf c_{k,j} & i = 1, \dots, N/2 \\
     \sum\limits_{k=N-i+2}^{N/2} \bf c_{k,j} &  i = N/2+1, \dots, N 
    \end{array} \right. ,
    \label{eq:matching_wave_formular} 
\end{equation}
for edge detection kernels of order $m$.

Let $\bf u_x$ be the true jump function in $x-$ direction, and $\bf u_y$ be the
true jump function in $y-$ direction. Given the matching wave form $w$ we
assume $\bf g_x \approx {\bf w} * \bf u_x$ and $\bf g_x   \approx {\bf w^T} *
\bf u_y$, where $\bf u_x$ and $\bf u_y$ are the desired jump function
approximations. Thus, using (\ref{eq:jump_conv}), approximations of $\bf u_x$
and $\bf u_y$ can be found by solving the optimization problem 
\begin{eqnarray}
  ({\bf u_x},{\bf u_y}) & = & \arg \min_{({\bf u_x},{\bf u_y})}   \frac{1}{2} \| {\bf w} * {\bf u_x} - {\bf c} * {\bf \Phi} \|_2^2  \nonumber \\
  & & + \frac{1}{2} \| {\bf w}^T * {\bf u_y} - {\bf c}^T * {\bf \Phi} \|_2^2 \nonumber \\
  & & + \lambda ( \| ({\bf u_x},{\bf u_y}) \|_{1,2}), 
  \label{eq:edge_solution}
\end{eqnarray}
where $\| ({\bf u_x},{\bf u_y}) \|_{1,2} = \sum_{i,j} \sqrt{ ((\bf
u_x)_{i,j})^2 + ((\bf u_y)_{i,j})^2}$ and $\lambda$ is a regularization
parameter. In the appendix a proof of convergence of the proposed method to the
true jump function as $N \to \infty$ similar to the results in
\cite{Archibaldetal05,GelbTadmor06} is presented.

The measured phase $\bf \Phi$ contains jump discontinuities of size $2\pi$ due
to phase wrapping. With the assumption that there are no jumps larger than
$2\pi$ due to structure, it is easy to identify jumps from phase wrapping. To
remove all jumps from phase wrapping we replace $\bf u_x \to ( {\bf u_x} \mod
2\pi ) $ and  $\bf u_y \to ({\bf u_y} \mod 2\pi)$. To solve the optimization
problem an augmented Lagrangian method can be used, e.g., see \cite{Wu2010}.

\subsection{Phase image reconstruction}
 In the second stage a convex optimization problem is solved to find a function
such that its gradients match the detected sharp gradients of the phase.
 
 \begin{equation}
        \min\limits_{\bf \Phi_h} \| \nabla {\bf \Phi_h} - (\bf u_x, \bf u_y) \|_{\bf{m}}^2  + \epsilon \| \bf \Phi_h \|_2^2,
        \label{eq:im_recpn}
 \end{equation}
 where 
 $$
 \| \bf \xi \|_{\bf{m}}^2  =  \sum\limits_{i,j} \bf{m}_{i,j} | \bf \xi_{i,j} |^2,
 $$
 and $\bf{m}$ is chosen such that unreliable phase measurements are
down-weighted. We use the square of the magnitude of the image as weights
because it has previously been connected to noise in the phase
\cite{Taylor:2008}. The second part of (\ref{eq:im_recpn}) is needed (i.e.,
$\epsilon>0$) because $\nabla {\bf \Phi_h}$ is invariant to an additive constant. We
choose $\epsilon$ large enough to make the problem numerically stable but small
enough that the regularization does not change the gradient of the solution
significantly ($\epsilon \approx 10^{-8}$). 
 
For a given ROI that does not contain edges,
the minimizer of (\ref{eq:im_recpn}) automatically satisfies the Laplace
equation $\vec{\nabla}^2 \Phi_h = 0$. Note that as a
consequence the algorithm separates the phase in a harmonic part (away from
edges) and a non-harmonic part at the edges. The background phase can be found
by subtracting $\Phi_h$ from $\Phi$. 

Because the edges are removed the background $\Phi_b$ is a continuous function, it is not necessarily harmonic because it may contain discontinuities in higher derivatives. The method could be expanded to detect and remove discontinuities in higher derivatives, see e.g. \cite{saxenaetal2007}, however the noise content in the images may be a problem since edge detection on gradients are very sensitive to noise.

The background suppression procedure is summarized in
Algorithm~\ref{alg:bg_supress_pseudo}.

\begin{algorithm}
\caption{Background suppression algorithm overview}
\label{alg:bg_supress_pseudo}
\begin{algorithmic}[1]
\REQUIRE A complex MRI image $\bf f \in \mathcal{C}^{N_\textrm{pixel}}$
\ENSURE $\bf \Phi_h$. \\
\begin{enumerate}
  \item Find sharp gradients ($\bf u_x, \bf u_y, \bf u_z$) in the phase of $\bf f$ using sparsity enforcing edge detection.
  \item Solve $\min_{\bf x} \| \nabla {\bf x} - (\bf u_x, \bf u_y, \bf u_z) \|_{\bf m}^2 + \epsilon \| \bf x \|_2^2$ with weighting $\bf m=| \bf f |^2$ 
  \item Find $a \in \mathcal{R}$, such that $({\bf x})_{i,j,k} + a = r$ for a reference voxel, i.e. $r={\bf \Phi}_{i,j,k}$
  \item $\bf \Phi_h := \bf x + a$ 
\end{enumerate}
\end{algorithmic} 
\end{algorithm}

\section{Results and Discussion}

We apply the proposed method to  MR temperature imaging and $\Phi_b$ correction of
PRF measurements. 

%

\subsection{Reference-less temperature imaging}
\begin{figure*}[h!t]
\centering
\subfloat[Magnitude image]{\includegraphics[width=2.4in]{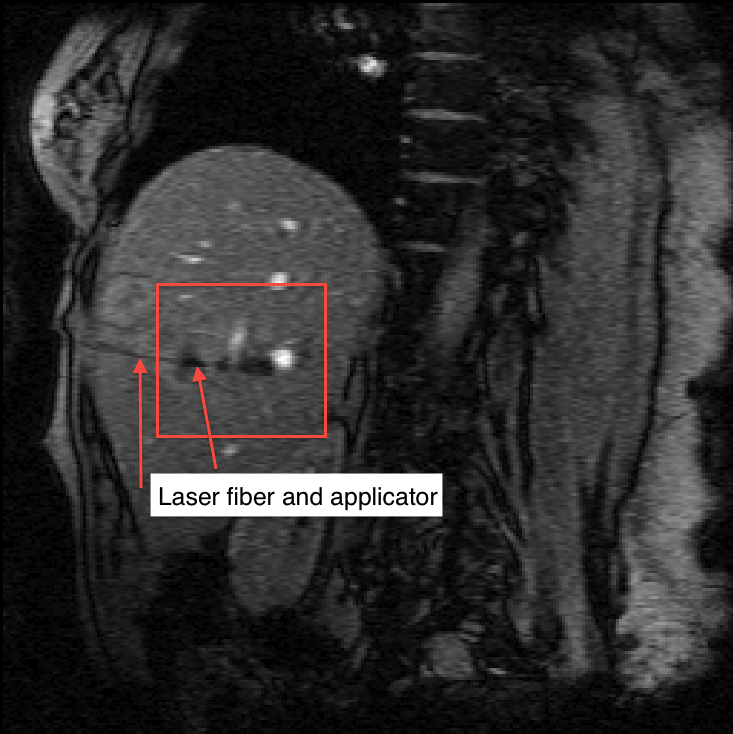}%
\label{fig:heat1:mag}}
\subfloat[Phase image]{\includegraphics[width=2.4in]{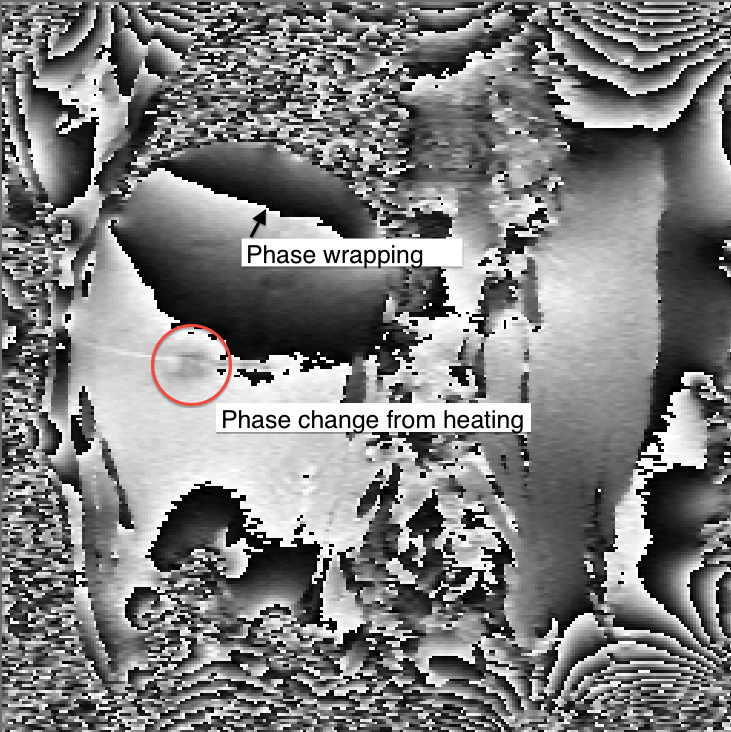}%
\label{fig:heat1:phase}} \\
\subfloat[First order jump function approximation]{\includegraphics[width=2.4in]{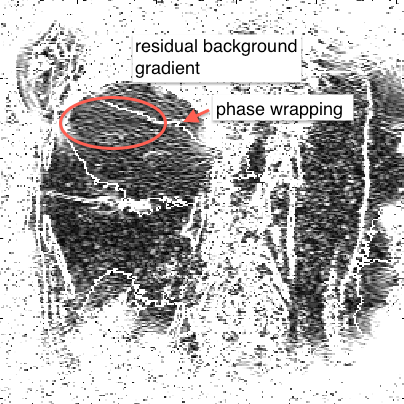}%
\label{fig:heat1:edge_o1}}
\subfloat[Third order jump function approximation]{\includegraphics[width=2.4in]{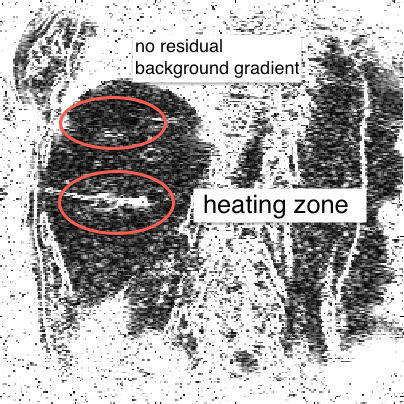}%
\label{fig:heat1:edge_o3}}

\caption{Magnitude and phase of a gradient echo pulse sequence during thermal
therapy. The heating causes signal loss in the magnitude image and a shifted
phase in the phase image. The phase change due to heating and phase wrapping artifacts are pointed out.
The images of Figure~\ref{fig:heat2} are zoomed to the box indicated in (a). Panels (c) and (d) show the magnitude of the jump function of (b) computed using eq. (\ref{eq:edge_solution}) with a first and third order kernel. The first order edge detection clearly shows a residual background shading on the top of the liver, while the third order detector does not show the shading. The phase wrapping jumps in (c) can be easily removed by projecting the jump function to an interval of $[-\pi,\pi)$, which was done in (d).} \label{fig:heat1}
\end{figure*}

\begin{figure}[h!t]
\centering
\includegraphics[width=3in]{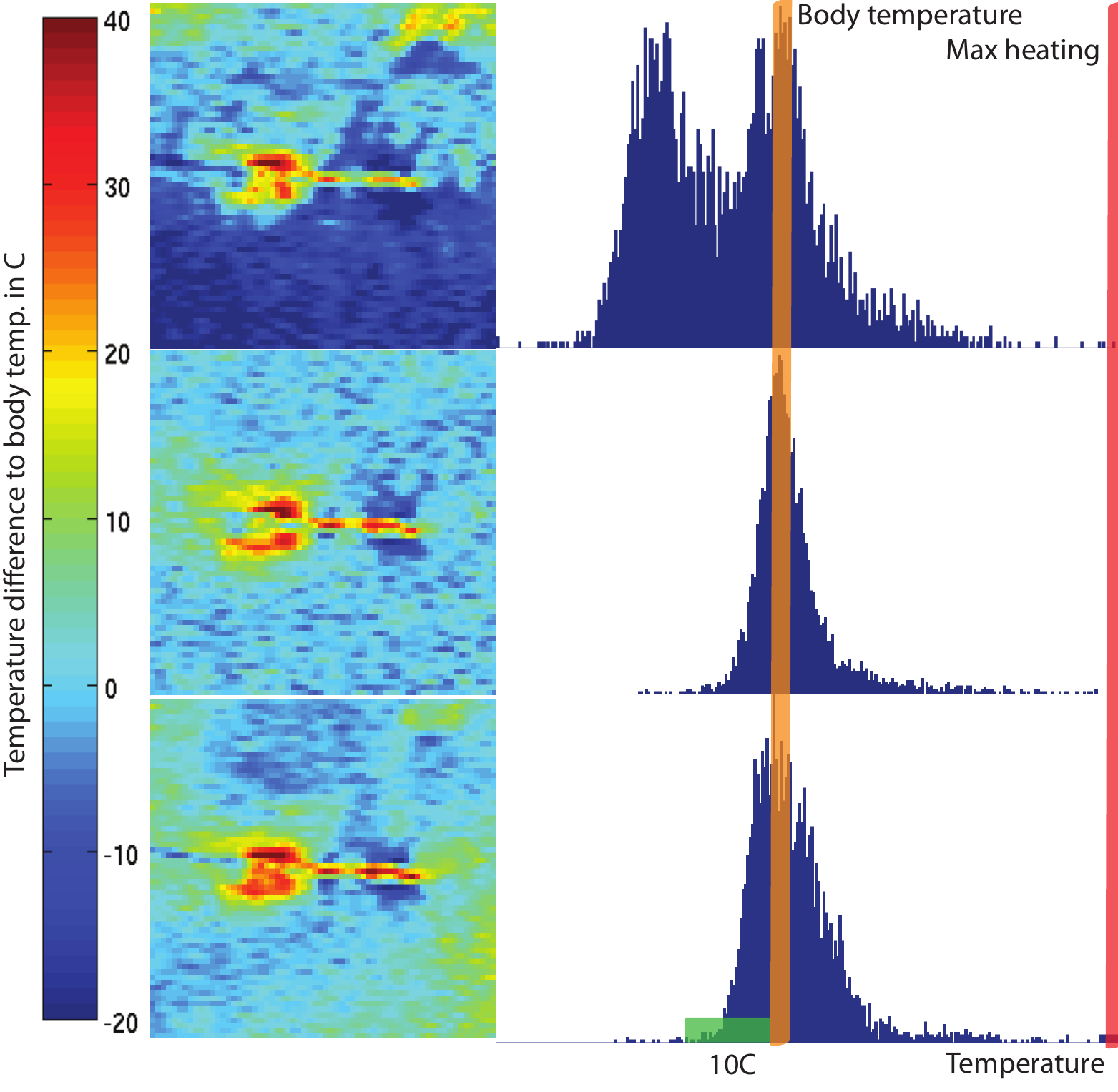}
\caption{Comparison of reference frame subtraction (top and middle) and the
proposed method with patient motion (e.g. due to breathing) between the reference frame and the heating frame. 
The top and middle rows show a case with and without motion.
The bottom
row shows the opposed background suppression technique, which does not use a
reference frame. The histogram illustrate the temperature distribution in the
images relative to the body temperature. } \label{fig:heat2}
\end{figure}

For temperature imaging, complex valued 2D images are collected using a
gradient echo pulse sequence. Acquisition parameters: Spoiled gradient recalled
sequence, $B_0=1.5T$, $TR=27ms$, $TE=12ms$, Pixel Bandwidth $280Hz$, Matrix
$256\times128$. 

Figure~\ref{fig:heat1}(a) and (b) shows the magnitude and phase of the acquired image
during a laser induced thermal therapy. The dark regions inside the box cause a signal loss
due to the heating. At the corresponding location in the phase image a negative
shift of the phase due to the heating can be seen. This shift is linear with
temperature change and is used to measure temperature changes. The phase
variations outside the heating zone are off resonance effects from an
in-homogeneous $B_0$ field, i.e. $\Phi_b$. The jumps (white to black) are due to phase
wrapping artifacts. In Panels (c) and (d) two jump function approximations of the phase image 
computed  using (\ref{eq:edge_solution})  with a kernels of order one and three are shown.
Similar to the example in Figure~\ref{fig:oscillations} the first order jump function approximation is non-zero
in areas of large gradients. If the first order approximation is used in (\ref{eq:im_recpn}), 
then the reconstructed image shows the same background that original phase image has. On the other
hand, if the third order approximation is used in (\ref{eq:im_recpn}), then the resulting
reconstruction has a suppressed background phase. The parameter $\lambda$ in (\ref{eq:edge_solution}) 
is used to suppress noise and to force a sparse jump function approximation. Note that (d) is significantly 
more sparse than (c). For this and all other experiments in this paper $\lambda=10^{-6}$ was used.


In traditional temperature imaging the complex image of the heating is
subtracted by an image before the heating. If the background phase did not change
during the heating procedure then the phase of the resulting image only
contains changes due to heating. Thus the procedure is very sensitive to
patient motion and other bulk magnetic field changes. 

Figure~\ref{fig:heat2} shows a temperature map corresponding to the highlighted
region in Figure~\ref{fig:heat1}. The left part of the figure shows the
temperature image and the right part the histograms corresponding to the
images. The temperature maps are  obtained using the reference frame subtraction
method in the top and middle row and the proposed method in the bottom row.  In
the top row a pre-heating frame was used, where the background phase changed as
compared to the heating frame. Temperature measurements show large variations
away from the heating zone in the middle. The temperature histogram on the
right also reflects the large variations. The width of the histogram around the
body temperature is often used to measure the error of the method. In the  the
middle row a pre-heating frame was used where the background did not change
relative to the heating frame. The histogram on the right is narrower around
the body temperature indicating a much smaller measurement error. Unfortunately
often it is very difficult or impossible to find a pre-heating frame with the
same background phase \cite{Vigen:2003aa}.The temperature estimate using the
proposed method is shown in the bottom row. The temperature map on the left
shows a much more homogeneous background, comparable to the top row image. The
histogram is also much narrower around the body temperature. This result demonstrates,
that our method results in similar temperature estimates as the reference frame subtraction. Of course we cannot expect to 
obtain exactly the same solution as a successful reference frame subtraction. 

Temperature variations around the body temperature are less than 10 degC. 
The edge detection methods naturally provide the boundary of the laser applicator
for this application.
Potential difficulties of the proposed method arise if the heating signature is
very smooth, or the background contains large gradients. For the proposed method
a very smooth temperature change will  be absorbed into the background field $\Phi_b$. 
Similarly, large gradients in the background field may be absorbed into $\Phi_h$.

\subsection{Background Removal for quantitive Susceptibility Weighted Imaging}
\begin{figure*}[h!t]
\centering
\subfloat[Magnitude of 1st echo]{\includegraphics[width=2.5in]{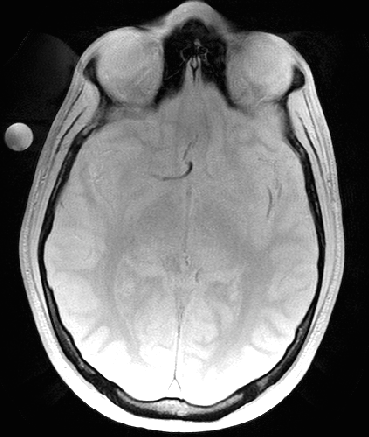}
\label{fig:ppm:mag}}
\hfil
\subfloat[PPM map, $\bf \Phi$]{\includegraphics[width=2.5in]{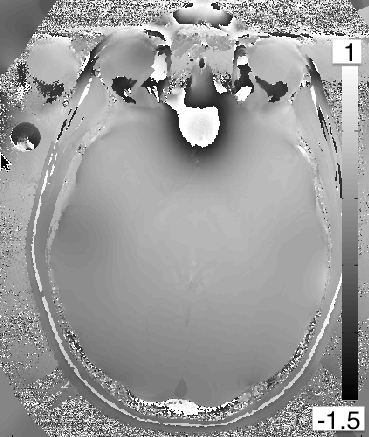}%
\label{fig:ppm:phase}} \\
\subfloat[Corrected PPM map,${\bf \Phi_h}$]{\includegraphics[width=2.5in]{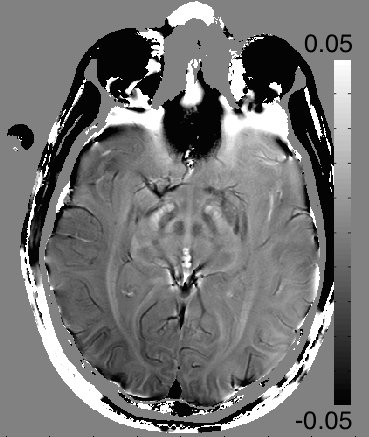}%
\label{fig:ppm_correted:phase}} 
\hfil
\subfloat[PPM map of the estimated background, ${\bf \Phi_b}={\bf \Phi} - {\bf \Phi_h}$]{\includegraphics[width=2.5in]{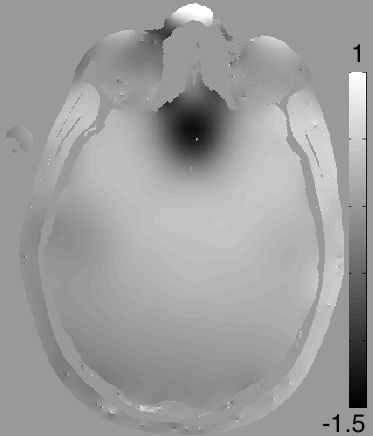}%
\label{fig:ppm:phase}}
\caption{
Background suppression of a ppm map in a healthy volunteer. (a): Magnitude of the first echo of a multi echo sequence. (b): PRF map estimation using ARMA technique.(c): Background suppressed PRF map. Blood vessels appear darker due to magnetic susceptibility effects. (d): Estimated background field map in ppm.}
\label{fig:brain_SWI}
\end{figure*}

Figure~\ref{fig:brain_SWI}  shows brain data, acquired using a 3T scanner
(MR750; GE Healthcare Technologies, Waukesha, WI) from a healthy volunteer,
using a 2D-MFGRE sequence with echo spacing = 3.4ms, number of echoes = 16, TE
= 2.4 to 28.5 ms, TR = 2200 ms, flip = 60, FOV = 22cm, matrix = 320x256, slice
thickness = 5mm, number of slices = 30. An ASSET calibration scan is performed,
and sensitivity maps for each of the 32 coils in a 32-channel head coil are
obtained. K-space is under-sampled by a factor of 2 by skipping every other
frequency encoding line. The magnitude of the first echo is shown in panel (a).
The reconstructed images are processed pixel wise by fitting a infinite impulse
response (IIR) filter to each echo train. This was done using the Auto
Regressive Moving Average (ARMA) \cite{Taylor:2008} technique. The PRF of the
dominant peak  can be extracted from the coefficients of the IIR filter. The
resulting PRF in ppm at each pixel is illustrated in Panel (b). Panel (c) shows
the background corrected image. Blood vessels appear darker due to a negative
ppm shift of oxidized blood. Brain structures in the mid brain also show
significant contrast after applying our background suppression technique. Panel
(d) shows the difference of (c) and (b), i.e. the suppressed background. Off
resonance shifts can be seen in particular near air cavities near the sinus and
the ears. This example demonstrates the potential of the method for quantitive
estimation of PRF shifts due to magnetic susceptibility differences for
different tissue types. 

\subsection{Noise sensitivity of the background map estimation}
\begin{figure*}[h!t]
\centering
\subfloat[Magnitude image]{\includegraphics[width=2.5in]{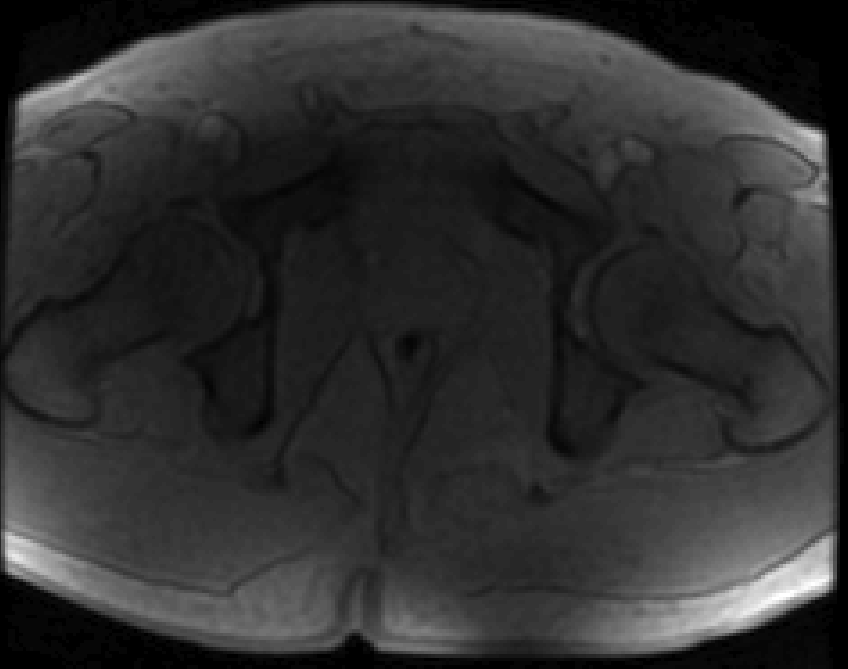}%
\label{fig:ppm:mag}}
\hfil
\subfloat[PPM map, {\bf Phi}]{\includegraphics[width=2.5in]{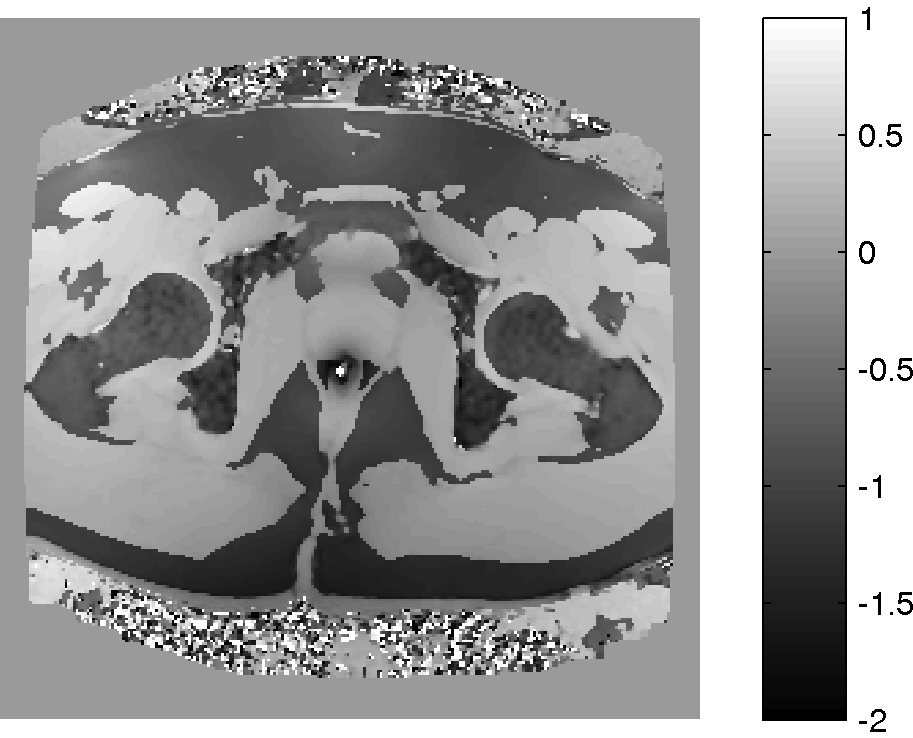}%
\label{fig:ppm:phase}} \\
\subfloat[Corrected PPM map, $\bf \Phi_h$]{\includegraphics[width=2.5in]{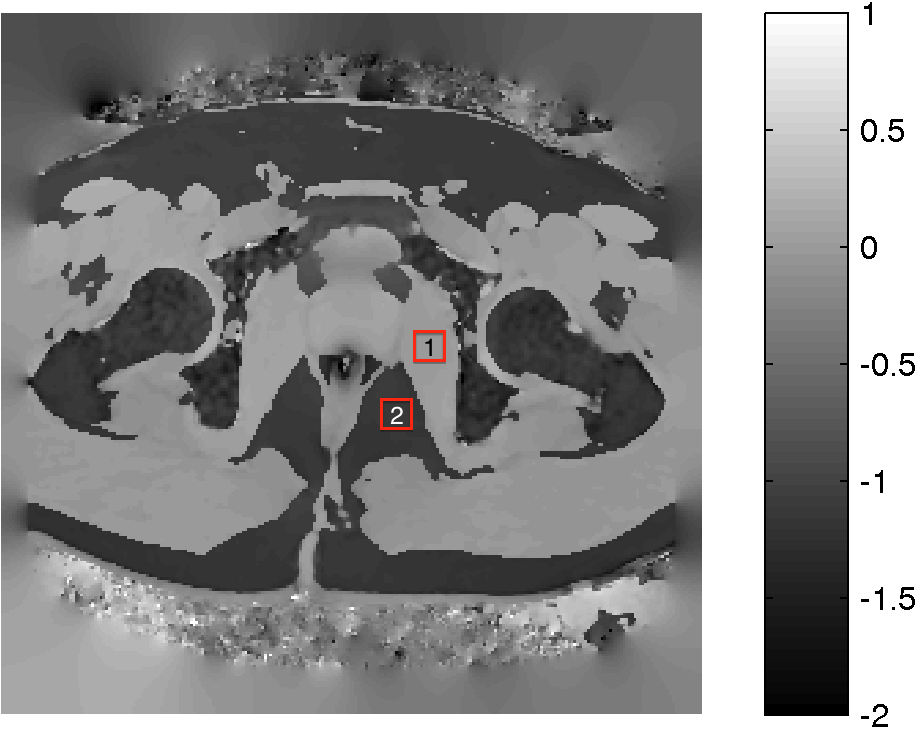}%
\label{fig:ppm_correted:phase}} 
\hfil
\subfloat[$\bf \Phi_b$ field map ]{\includegraphics[width=2.5in]{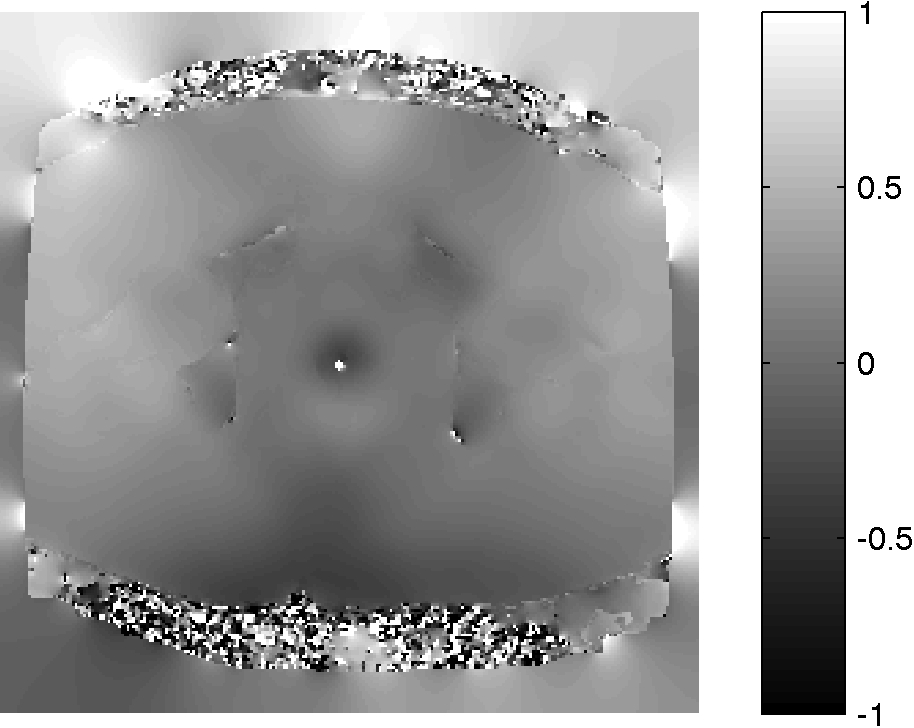}%
\label{fig:ppm:phase}}
\caption{Background suppression of a ppm map with water and fat signal. (a):
Magnitude of the first echo of a multi echo sequence. (b): PRF map estimation
using ARMA technique.(c): Background suppressed PRF map. (d): Estimated
background field map in ppm. The fat and water peak is aliased with $4.32$ ppm
due to the chosen echo spacing. 
the fat-water boundary.  The labels 1 and 2 are water and fat ROIs that are used to determine the CNR gain of the method.}
\label{fig:ppm}
\end{figure*}

In this section we add Gaussian noise of various levels to a chemical shift
image with high contrast to noise ratio (CNR) to test the robustness under
noise. 

An MFGRE sequence was used to collect using a 3T scanner (SIGNA; GE Healthcare Technologies, Waukesha, WI)  
16 echoes with echo spacing of $1.816ms$,
$TR=75ms$, Matrix $128\times128$ and Pixel Bandwidth of $651.016 Hz$.
Figure~\ref{fig:ppm} (a) illustrates the magnitude of the first echo of a slice
through the pelvis. The multi echo data is processed pixel wise by the ARMA
technique. The resulting PRF in ppm at each pixel is illustrated in Panel (b).
The dark areas in panel (b) are subcutaneous lipid.  Inhomogeneities of the PRF
can be seen near the air inclusion in the middle and throughout the image, in
particular in the lipid regions. The result of the background suppression
technique is shown in Panel (c), which illustrates, that most of the background
inhomogeneities have been removed. Note the fat-water ppm difference is about
$1.16 ppm$ and not the expected shift of about $3-3.5ppm$. This is a
consequence of aliasing introduced by the finite sampling in echo direction. If
the aliasing is taken into account by adding the bandwidth of $4.3212 ppm$,
then the fat-water shift is $3ppm$ as expected. The pixel wise difference of
the images in Panels (b) and (c) is shown in Panel (d). Note that the estimated
background field ${\bf \Phi_b}$ is smooth in areas of transitions of fat and water. The map is
not necessarily smooth in areas of low signal, however a correct estimation of
the background in low signal areas is not a well-posed problem.

Noise is added to the original ppm map and a contrast to noise ratio is
computed as follows: Two regions of interest (ROIs) are selected $R_w, R_f$,
one in water the other in fat, labeled 1 and 2 in Figure~\ref{fig:ppm} (c).
The size of the ROI is $11\times11$ pixels. The
CNR is computed as 
$$
        CNR = \frac{\textrm{mean}(R_w)-\textrm{mean}(R_f)}{\textrm{std}(R_w \bigcup R_f)}.
$$  
\begin{figure}[h!t]
\centering
\includegraphics[width=3in]{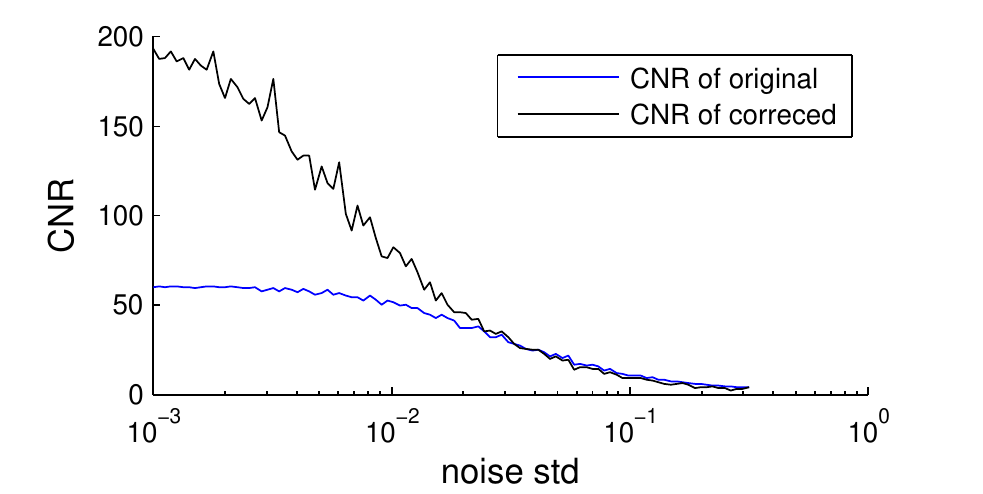}
\caption{Contrast to noise ratio for different levels of noise is shown. The
uncorrected image has a low CNR even at small noise levels because of the
inhomogeneity of the background. For larger noise levels the CNR of the
background suppression is reduced because the edge detection step becomes less
well posed. However, the CNR is on average is larger than the CNR of the
original data.} \label{fig:CNR}
\end{figure}

Figure~\ref{fig:CNR} shows the CNR computed for different noise levels. For
very low noise levels the CNR for the uncorrected ppm map is about $60$, for
the corrected map over $170$. This is because the inhomogeneities in the ROIs
cause a large standard deviation thus a small CNR for the uncorrected image.
With increasing noise the edge detection becomes more difficult and the
background suppression method results in lower CNR gains. However the method
does not amplify the noise and gives an significant CNR improvement for low and
moderate noise levels.

\section{Conclusion}
We have presented a novel background suppression method for MRI phase data. The
method works in two stages. First edges are detected in the original phase
image using a higher order edge detection method that is based on compressed
sensing principles and thus promotes a sparse edge map. In a second stage the
corrected phase map is computed by finding a piecewise harmonic function that
fits the edges. 

We have demonstrated in numerical experiments that the method can be used to
remove the background inhomogeneities of gradient echo MRI sequences.
Applications include MR temperature imaging and chemical shift imaging. The
method is computationally efficient and can remove the background of a
$256\times256$ image a few seconds using our non-optimized MATLAB code. 

In future work we want to explore other application where inhomogeneities have
to be removed and optimize our code. We also would like to improve the
performance of the method in regions of very large but smooth gradients in
particular near tissue air interfaces.

\section*{Acknowledgment}
This research is supported in part by the MD
Anderson Cancer Center Support Grant CA016672 and the National
Institutes of Health (NIH) award 1R21EB010196-01 and Apache Corporation.

The authors would like to thank Wotao Yin at UCLA for many useful discussions
while he was the Post-doc advisor for the first author.
\section*{Acknowledgment}
This research is supported in part by the MD
Anderson Cancer Center Support Grant CA016672 and the National
Institutes of Health (NIH) award 1R21EB010196-01 and Apache Corporation.

The authors would like to thank Wotao Yin at UCLA for many useful discussions
while he was the Post-doc advisor for the first author.


%

\appendix

\section{Detection guarantees}
Because the proposed method is based on convex optimization in the context of
compressive sensing, we can derive some results when such an edge detection
procedure can be expected to be successful.

For simplicity we first consider the one dimensional case, with edge detection
kernels of the form (\ref{eq:def:c_j}) with no noise. In this case
\eq{eq:edge_solution}  can be simplified, and renormalized, such that
\begin{eqnarray}
\by  & =  & \arg \min\limits_{\bu} \{ \frac{1}{2} \|\bu * \bw - \bc * {\bf \Phi} \|_2^2 + \lambda \| \bu \|_1^1 \}\\
       & =  & \arg \min\limits_{\bu} \{ \frac{1}{2} \|W \bu  -  \bb \|_2^2 + \lambda \| \bu \|_1^1 \} \\
       & =  & \arg \min\limits_{\bu} \{ \frac{1}{2} \|\frac{1}{\| \bc \|_2}W \bu  -  \frac{\bb}{\| \bc \|_2} \|_2^2 + \frac{\lambda}{\| \bc \|^2_2} \| \bu \|_1^1 \},
\label{eq:deconb_edge_1d}
\end{eqnarray}
where $W$ is the unique Toeplitz matrix such that $W\bx = \bx * \bw$ for an
arbitrary vector $\bx$. Furthermore, $\bb=\bc * {\bf \Phi}=L{\bf \Phi}$ and $L$
is a Toeplitz matrix such that $L\bx = \bc * \bx$ for arbitrary $\bx$. The
entries of $\bc$ are given by $\eq{eq:jump_detector}$ and the entries of $\bw$
by \eq{eq:matching_wave_formular}.

A key concept to derived reconstruction guarantees in compressive sensing is
the concept of the restricted isometry property (RIP). A matrix $A$ has the RIP
if, for an arbitrary choice of an index set $\bt$ with cardinality less or
equal to the sparseness $S$ of the true solution (in other words $|\bt| \leq
S$), there exists a constant $\delta_S$, such that \begin{equation}
        (1-\delta_S) \|\bx\|_2^2 \leq \| A_\bt \bx \|^2_2 \leq (1+\delta_S) \|\bx\|_2^2,
\end{equation}
for an arbitrary vector $\bx$. Here $A_\bt$ denotes the sub-matrix of $A$,
obtained by extracting the columns corresponding to the indices in $\bt$, see
e.g. \cite{Candes05}.

It is difficult to derive the RIP for an arbitrary selection of columns of $W$,
however with reasonable restrictions we can derive the following result.

\begin{theorem}
The convolution matrix $\frac{1}{\| \bc \|_2}W$ in \eq{eq:deconb_edge_1d} has
the RIP under the assumption that any two jumps are at least $m$ points apart.
\label{thm1}
\end{theorem}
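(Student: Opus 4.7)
The plan is to exploit the structural fact that the spacing condition forces the relevant columns of the Toeplitz matrix $W$ to have pairwise disjoint supports, which reduces the RIP to a trivial diagonal isometry. The argument has three ingredients: (i) determine the support of the matching waveform $\bw$, (ii) read off orthogonality of the selected columns of $W_\bt$, and (iii) compute the resulting isometry constant.

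First I would pin down the support of $\bw$. By (\ref{eq:matching_wave_formular}) the matching waveform is the response of the polynomial annihilation operator, whose $m+1$ nonzero taps are given by (\ref{eq:def:c_j}), to a unit step. The defining property of the coefficients $c_{m,k}$ is that they annihilate any polynomial of degree at most $m$; applied to a unit step, their telescoping tail sums in (\ref{eq:matching_wave_formular}) vanish outside a window of length $m$. Hence $\bw$ is supported on at most $m$ consecutive indices. Since $W$ is Toeplitz with generating sequence $\bw$, each column of $W$ is a shift of $\bw$, so if $\bt=\{t_1<\cdots<t_S\}$ with $|\bt|\leq S$ satisfies $t_{j+1}-t_j\geq m$ for every $j$, the columns of $W_\bt$ have pairwise disjoint supports.

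Second, the disjoint-support property forces the off-diagonal entries of $W_\bt^T W_\bt$ to vanish, and every diagonal entry equals $\|\bw\|_2^2$. Therefore
$$
\frac{1}{\|\bc\|_2^2}\, W_\bt^{T} W_\bt \;=\; \frac{\|\bw\|_2^2}{\|\bc\|_2^2}\, I,
$$
so for every vector $\bx$ of compatible dimension,
$$
\Bigl\|\tfrac{1}{\|\bc\|_2}\, W_\bt \bx \Bigr\|_2^2 \;=\; \frac{\|\bw\|_2^2}{\|\bc\|_2^2}\, \|\bx\|_2^2.
$$
The RIP then holds with $\delta_S = \bigl|\|\bw\|_2^2/\|\bc\|_2^2 - 1\bigr|$, uniformly in $S$. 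The ratio $\|\bw\|_2/\|\bc\|_2$ is of order one and can be evaluated in closed form directly from the explicit coefficients in (\ref{eq:def:c_j}) and (\ref{eq:matching_wave_formular}); the normalization $1/\|\bc\|_2$ in (\ref{eq:deconb_edge_1d}) is precisely what makes this ratio a finite, $S$-independent constant.

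The main obstacle is the boundary bookkeeping. The formula (\ref{eq:matching_wave_formular}) splits into two cases around the midpoint $N/2$, reflecting how the kernel wraps or is truncated near the signal boundary, and the clean interior support of $\bw$ must be separated from boundary artifacts before the disjoint-support claim can be asserted at exactly spacing $m$ rather than $m+1$ or $2m$. The cleanest way around this is either to adopt a circular convolution convention, under which $\bw$ is a genuine $m$-supported periodic kernel, or to restrict the admissible jump locations to be at least $m$ indices away from both ends of the signal. Once such a convention is fixed, the orthogonality step is immediate and the remainder is an elementary linear-algebra computation.
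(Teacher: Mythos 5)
Your proof is correct and follows essentially the same route as the paper's: the spacing assumption forces the selected columns of $W$ (shifts of the matching waveform $\bw$, whose support has length at most $m$ by the annihilation of constants) to have pairwise disjoint supports, so $W_\bt^T W_\bt$ is a multiple of the identity and the RIP follows immediately. The only difference is the normalization bookkeeping: the paper asserts $\frac{1}{\|\bc\|_2^2} W_\bt^H W_\bt = I$, i.e.\ $\delta_S=0$, which implicitly identifies each column norm with $\|\bc\|_2$, whereas you correctly note the column norm is $\|\bw\|_2$ and obtain $\delta_S = \bigl|\, \|\bw\|_2^2/\|\bc\|_2^2 - 1 \,\bigr|$; on this point (and on the boundary caveat concerning \eq{eq:matching_wave_formular}) your version is the more careful of the two.
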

\begin{proof}
Any sub-matrix  $W_\bt$ for $\bt$ such that the indices are at least $m$ points
apart has at most one non-zero entry in each row. In other words the non-zero
entries in one column do not overlap with non-zero entries of a different
column. Therefore  $\frac{1}{\| \bc \|^2_2} W_\bt^H W_\bt = I$, where $I$ is
the identity matrix and $ \| \frac{1}{\| \bc \|_2} W_t \bx \|^2_2 = \frac{1}{\|
\bc \|^2_2} (W_t \bx)^T (W_t \bx) =  \frac{1}{\| \bc \|^2_2} \bx^H W_t^H W_t
\bx = \| \bx \|_2^2$. And thus, the RIP holds with $\delta_S=0$.  \end{proof}

It is proven in  \cite{Candes05}  that the solution $\by$ of a convex
optimization problem of the form \eq{eq:deconb_edge_1d}, with $W$ satisfying
the RIP, satisfies \begin{equation}
        \| \by-\by^\# \|_2 \leq C_{\textrm{lp}} \| W \by^\# - \bb \|_2^2 = C_{\textrm{lp}} \|\be\|_2,
        \label{eq:cs_error}
\end{equation}
for a constant $C_{\textrm{lp}}$ that depends only on $\delta_{S}$. In other
words the error that is introduced into $\bb$, i.e. in our case due to the
proposed assumption $\bf g \approx {\bf w} * \bf u$, is at most amplified by a
constant, for an appropriate choice of the regularization parameter $\lambda$.
With $\delta_{S}=0$, $C_{\textrm{lp}} \approx 5.5$.

\begin{theorem}For a given order $m$, let $f$ be a piecewise smooth function
with $k$ continuous derivatives in intervals were $f$ is smooth, sampled on $N$
equally spaced points, with jumps at least $m$ points apart. Then,  for an
appropriate choice $\lambda$, there exists a constant $C$ that depends on $m$
and the size of the derivatives of $f$, such that the error made by solving
(\ref{eq:deconb_edge_1d}) is bound by 
\begin{eqnarray}
        \|y^\# - y\|_2 & \leq \frac{\sqrt{N}}{ N^{k_m} } C,   
\end{eqnarray}
where $k_m = \min (k,m)$. 
\label{thm2}
\end{theorem}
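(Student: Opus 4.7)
The plan is to combine the restricted isometry property of Theorem~\ref{thm1} with the compressive sensing error bound \eq{eq:cs_error}, by identifying the noise vector $\be$ with the polynomial annihilation operator applied to the smooth part of $f$ and then bounding that quantity using classical polynomial annihilation estimates.

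First I would decompose $f = f_s + f_j$, where $f_j$ is the piecewise constant function carrying exactly the jumps of $f$ (same locations and heights) and $f_s$ is smooth on $[0,1]$ with $k$ continuous derivatives. By linearity of convolution, $\bc * {\bf \Phi} = \bc * f_j + \bc * f_s$ at the grid points. The matching waveform $\bw$ from \eq{eq:matching_wave_formular} is, by construction, the response of $\bc$ to a unit sampled step, so $\bc * f_j = \bw * \by^\#$ where $\by^\#$ carries the jump heights at the jump indices. Consequently $\be = W \by^\# - \bb = -\bc * f_s$, and the problem reduces to bounding $\|\bc * f_s\|_2$.

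Next I would invoke the pointwise polynomial annihilation estimates of \cite{Archibaldetal05,GelbTadmor06}: because $\bc$ annihilates polynomials of degree $m$ and $f_s$ has $k$ continuous derivatives, the Taylor remainder at any grid point obeys $|(\bc * f_s)_i| \leq C_1 h^{k_m}$ with $h = 1/N$ and $k_m = \min(k,m)$, where $C_1$ depends on $m$ and $\|f_s^{(k_m)}\|_\infty$. Summing squares over the $N$ sample points gives $\|\be\|_2 \leq C_1 \sqrt{N}\, N^{-k_m}$. Plugging this into \eq{eq:cs_error} with $\delta_S = 0$ from Theorem~\ref{thm1} (the $1/\|\bc\|_2$ rescaling only enters the overall constant since $\|\bc\|_2$ depends only on $m$) yields the claimed bound.

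The main obstacle is the first step: pinning down the identity $\bc * f_j = \bw * \by^\#$ rigorously. The formula \eq{eq:matching_wave_formular} for $\bw$ encodes specific boundary and tail conventions, and one has to verify either that this identity holds exactly on the grid or that any discrepancy is already absorbed into the $O(h^{k_m})$ remainder. The hypothesis that jumps are at least $m$ points apart is essential here because it prevents the responses of different jumps from interacting within a single stencil, and it is the very same hypothesis that made the RIP trivial in Theorem~\ref{thm1}, so the two ingredients are compatible. A minor but conceptually important subtlety is that $k_m = \min(k,m)$ reflects the fact that a fixed order-$m$ annihilation operator cannot exploit smoothness beyond degree $m$.
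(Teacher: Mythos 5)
Your proposal follows essentially the same route as the paper's proof: split $\Phi$ into a piecewise constant jump part whose response under $L$ is exactly the matching-waveform term $W\by^\#$ and a smooth remainder, bound the resulting model error $\be = L\br$ pointwise by the $O(h^{k_m})$ polynomial annihilation estimate of \cite{Archibaldetal05}, convert to $\ell_2$ with the $\sqrt{N}$ factor, and feed this into \eq{eq:cs_error} with $\delta_S=0$ from Theorem~\ref{thm1}. The only cosmetic difference is that you decompose globally while the paper works on a partition into subintervals each containing one jump, and the matching-waveform identity you flag as the main obstacle is exactly the step the paper also asserts without detailed verification, so no substantive gap separates the two arguments.
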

 
\subsection*{Note:}
Before we prove the theorem, we provide an interpretation of this result: In
particular we note that \eq{eq:deconb_edge_1d} recovers the edges exactly as $N
\to \infty$. The theorem also states that the rate of convergence to the true
jump function depends on the order of the edge detector $m$ and the
differentiability of $f$. The choice of $\lambda$ is unfortunately not trivial,
\cite{Candes05}, and depends on the underlying function $f$ and the order of
the edge detector \cite{Stefan:2012kq,SRG:08}. We also note that the provided
error bound in this theorem is a bound on the global error. In practice the
error varies locally. In particular numerical results show that the local error
is typically very small away from jumps and larger at jumps. The local nature
of the error is not captured by the stated theorem. Our numerical experiments
show also, that the local error is bounded even if jumps are closer than $m$
points. However, it is larger than the error at the jump if jumps are separated
by at least $m$ points. Overall the theorem provides an convergence guarantee,
but is not a very {\it sharp} result.  
\begin{proof}
The model error introduced by the assumption $L {\bf \Phi} \approx W \by^\#$ is given by
$$
\be = L {\bf \Phi} - W \by^\#.
$$
From the definition of $\bw$ in (\ref{eq:matching_wave_formular}) it follows
that the error is zero for piecewise constant functions. Thus, we decompose
${\bf \Phi}$ in a sum of a piecewise constant and a smooth part near the jumps. 
Let $J$ be the original domain of $f$, and let $J_1, \cdots, J_n$ be a
partition of $J$, such that each sub-interval $J_i$ contains the jump at
location $\xi_i$. We further require $\bigcup J_i = J$ and $\bigcap J_i =
\emptyset$. Let $f_1(x), \cdots, f_n(x)$ be functions defined on the whole real
line, such that $f(x) = f_i(x)$ for $x \in J_i$ and $f_i(x) \in C^k$ for all $x
\neq \xi_i$. Let 
\begin{eqnarray}
H(x) &=& \left\{ \begin{array}{cc}0 & \quad \textrm{for} \quad x < 0 \\
1 & \quad \textrm{for} \quad x \geq 0\end{array} \right.,  \nonumber \\
s_i(x) &:=& [f](\xi_i)H(x-\xi_i), \nonumber
\end{eqnarray}
and $r_i(x)=f_i(x)-s_i(x) \in C^k$. Here, $k$ is the largest number such that
$r_i$ and $f_i$ exist. Let $\bs_i$, $\br_i$ and ${\bf \Phi}_i$ be vectors with
point evaluations of $s_i(x)$, $r_i(x)$ and $f_i(x)$ respectively. Then,  we
see that 
$$
 \be_i = L {\bf \Phi}_i - W \by_i^\# = L ( \bs_i + \br_i ) - W \by_i^\# = L \br_i.
 $$
In other words the error in each subinterval can be estimated by the error that
is made by applying the edge detector to a function that is at least
continuous. It is shown in e.g. \cite{Archibaldetal05}, that the point wise
error made by the finite difference edge detector away from jumps can be bound
by: 
\begin{equation}
        \| L \br_i \|_\infty \leq \tilde C_i h^{k_m}   ,
        \label{eq:local_error}
\end{equation}
where $h$ is the grid spacing, $k_m = \min (k,m)$ and $\| \cdot \|_\infty$,
i.e. the maximum absolute value of a vector. $\tilde C_i$ is a constant that
only depends on the order $m$ of the edge detector. Let $\1$ be a the vectors
with entries $1$, then with 
$$
\| x \|_2 \leq \| \1 \|_2 \|x\|_{\infty} = \sqrt{N} \|x\|_\infty,
$$
we can bound the $l_2$ error of the edge detection by $\| \be \|_2 \leq  \tilde
C \frac{\sqrt{N}}{N^{k_m}}$. Combining this result with (\ref{eq:cs_error})
yields: 
$$
        \| y - y^\# \|_2 \leq \frac{\sqrt{N}}{ N^{k_m} } C. 
$$ 
\end{proof}





\end{document}